\documentclass[11pt]{amsart}
\usepackage{a4}
\usepackage{amssymb}
\usepackage{stackengine}

\stackMath

\usepackage{color}
\voffset-1.0 cm
\hoffset -1.5 cm
\textwidth 15 cm
\textheight 23 cm

\vfuzz2pt 
\hfuzz2pt 

\makeatletter
\newcommand{\leqnomode}{\tagsleft@true}
\newcommand{\reqnomode}{\tagsleft@false}
\makeatother

\newtheorem{theorem}{Theorem}
\theoremstyle{plain}
\newtheorem{lemma}[theorem]{Lemma}
\newtheorem{proposition}[theorem]{Proposition}

\newtheorem{corollary}[theorem]{Corollary}

\newtheorem{definition}[theorem]{Definition}


\DeclareMathOperator*{\Sing }{Sing}
\DeclareMathOperator*{\esssup}{ess\,sup}

\DeclareMathOperator*{\essinf}{ess\,inf}

\newcommand\trnorm[1]{\left\|\kern-1.2pt\left|#1\right\|\kern-1.2pt\right|}

\newcommand{\N}{\mathbb{N}}
\newcommand{\C}{\mathbb{C}}
\newcommand{\R}{\mathbb{R}}
\DeclareMathOperator*{\bd}{bd}
\DeclareMathOperator*{\dist}{dist}
\DeclareMathOperator*{\supp}{supp}
\DeclareMathOperator*{\esssupp}{ess\,supp}

\let\latexchi\chi
\makeatletter
\renewcommand\chi{\@ifnextchar_\sub@chi\latexchi}
\newcommand{\sub@chi}[2]{
  \@ifnextchar^{\subsup@chi{#2}}{\latexchi^{}_{#2}}%
}
\newcommand{\subsup@chi}[3]{
  \latexchi_{#1}^{#3}%
}
\makeatother
\begin{document}

\title{Density of smooth functions in Musielak-Orlicz spaces}

\keywords{Musielak-Orlicz spaces,  variable exponent spaces, density of smooth functions in Musielak-Orlicz space}

\subjclass[2010]{46B42, 46E30, 46E15}

\author{Anna Kami\'{n}ska}
\address{Department of Mathematical Sciences,
The University of Memphis, TN 38152-3240}
\email{kaminska@memphis.edu}

\author{Mariusz \.Zyluk}
\address{Department of Mathematical Sciences,
The University of Memphis, TN 38152-3240}
\email{mzyluk@gmail.com}

\date{\today}

\thanks{}

\begin{abstract} We provide necessary and sufficient conditions for the space of smooth functions with compact supports $C^\infty_C(\Omega)$ to be dense in Musielak-Orlicz spaces $L^\Phi(\Omega)$ where $\Omega$ is an open subset of $\mathbb{R}^d$. In particular we prove that if $\Phi$ satisfies condition $\Delta_2$, the closure of $C^\infty_C(\Omega)\cap L^\Phi(\Omega)$ is equal to $L^\Phi(\Omega)$ if and only if the measure of singular points of $\Phi$ is equal to zero. This  extends the earlier density theorems proved under the assumption of local integrability of $\Phi$, which implies that the measure of the singular points of $\Phi$ is zero.  As a corollary we obtain  analogous results for Musielak-Orlicz spaces generated by double phase functional and we recover the well known result  for  variable exponent Lebesgue spaces. 
\end{abstract}

\maketitle

 We study here the problem of density of the space $C_C^\infty(\Omega)$  of smooth functions with compact supports in a subspace of order continuous functions $E^\Phi(\Omega)$ of Musielak-Orlicz spaces $L^\Phi(\Omega)$ over an open set  $\Omega\subset \mathbb{R}^d$. This is a standard problem in function spaces and it  has been considered before in $L^\Phi(\Omega)$ under some restrictions on the function $\Phi$. For rearrangement invariant spaces, and in particular  Orlicz spaces, it is well known and standard to prove that smooth functions are dense in the subspace of order continuous elements, which is equivalent to that simple functions are dense in this subspace. In Musielak-Orlicz spaces the situation is different. First we notice that not every simple function belongs to $E^\Phi(\Omega)$. However it is possible to show that the set of simple functions in $E^\Phi(\Omega)$ is big enough to be dense in this space. On the other hand  the case of smooth functions is different. There exist functions $\Phi$ such that the smooth functions are not dense in $E^\Phi(\Omega)$.  In fact we  will present  here necessary and sufficient condition for $\Phi$ in order to a subspace of the space of smooth functions $C_C^\infty(\Omega)$ is dense in $E^\Phi(\Omega)$.
 
In the recent decade  the Musielak-Orlicz spaces and their particular examples  of the variable exponent Lebesgue spaces or generated by double phase functionals have gained special attention in the context of studies the solutions of PDE belonging to the spaces \cite{CUF, DHHR, HHK, HH, YA}.   The knowledge when  smooth functions with compact supports are dense in the space is basic for this research.  

Let $\mathbb{R}^d$ be the $d$-dimensional Euclidean space equipped with the Lebesgue measure $|\cdot|$.
Given a Lebesgue measurable set $A\subset \R^d$, the set of all Lebesgue measurable complex valued functions on $A$ will be denoted as $L^0(A)$. For  $f\in L^0(A)$ its support is defined as the set 
\[
\supp(f)=\{x\in A: f(x)\neq 0\}.
\]
The set of all simple, complex  valued functions on $A$ is denoted as
  \[S(A)=\{ f\in L^0(A): \ |\supp f|<\infty \text{ and } f \text{ has finitely many values}\}.\] 
 For any open set $\Omega\subset \R^d$ and any $f\in L^0(\Omega)$ we define the essential support of $f$ as 
\[\esssupp (f)=\Omega\setminus\bigcup \{U\subset \Omega: U\text{ is open and } f=0 \text{ a.e. on } U\}.\]
Notice that $\esssupp (f)$ is a closed subset of $\Omega$.


Throughout this paper  $\Omega\subset \R^d$ stands always for an open set. Recall a function $f: \Omega \mapsto \C$ is said to be smooth if it possess all derivatives, and the set of all smooth functions is  denoted by $C^\infty(\Omega)$.  Notice also that, for $f\in C^\infty(\Omega)$,
\[\esssupp (f)=\overline{\supp(f)}.\]

 By $C_C^\infty(\Omega)$ we denote the set of all smooth compactly supported functions defined on $\Omega$, that is
\[
C_C^\infty(\Omega)=\{f\in C^\infty(\Omega): \ \esssupp(f)\text{ is compact}\}.
\]

 For any  point $(x_1,\dots,x_d)=x\in\R^d$ and $l>0$ we define the open cube of center $x$ and side-length $l$ as
\[
Q(x,l)=\left\{(y_1,\dots,y_d)\in\R^d: \ |x_n-y_n|<\frac{l}{2}, \text{ for }n=1,\dots,d \right\}.
\]
Similarly, for any  point  $x\in \R^d$ and any $r>0$ we define the open ball of center $x$ and radius $r$ as
\[B(x,r)=\{y\in\R^d: \ |x-y|<r\}.\]
A general open ball or cube often will be given without specifying their centers and radii or side lengths. In those cases an open ball will be usually denoted by $B$ and an open cube as $Q$.

\begin{definition}\label{def}
Let $A\subset \R^d$ be  Lebesgue measurable.  A function $\Phi:A\times [0,\infty )\to [0,\infty )$ is called a Musielak-Orlicz function  {\rm (}MO function{\rm )} on $A$  if
\begin{enumerate}
\item[$(a)$]  for every $x\in A$, $t\mapsto\Phi(x,t)$ is convex, 
\item[$(b)$] for every $x\in A$, $\Phi(x,t)=0$ if and only if $t=0$, 
\item[$(c)$] for every $ t\in [0,\infty) ,\ x\mapsto\Phi (x,t)$ is  Lebesgue measurable on $A$.
\end{enumerate}
\end{definition}

 A $MO$ function $\Phi$ is said to satisfy the $\Delta_2$ condition if there exist a constant $C>0$ and a positive function $h\in L^1(A)$, that is $\int_A h(x)\, dx <\infty$, such that 
\[
 \   \Phi(x,2t)\leq C\Phi(x,t)+h(x), \ \ \ \ t\geq 0,\ \text{ a.a.} \ x\in  A.
 \]
Given a measurable set  $A\subset\R^d$ and $MO$ function $\Phi$, define the functional $I_\Phi$ on $L^0(A)$ by
\[
I_\Phi(f)=\int\limits_{A}\Phi(x,|f(x)|)dx.
\]
  The {\it Musielak-Orlicz space} ($MO$ space) $L^\Phi(A)$ is defined as
    \[L^\Phi(A)=\left\lbrace f\in L^0(A): \exists \lambda>0 \ I_\Phi (\lambda f)<\infty \right\rbrace,\]
and  its {\it subspace of finite elements} as   
\[E^\Phi (A)=\left\lbrace f\in L^0(A): \forall \lambda>0 \ I_\Phi (\lambda f)<\infty \right\rbrace.\]
 The functional
\[\|f\|_{\Phi}=\inf\left\lbrace \lambda>0: I_\Phi\left(f/\lambda\right)\leq1\right\rbrace, \ \ \ \   f\in L^0(A),
\]
is a norm on the space $L^\Phi(A)$, called the Luxemburg norm. For extensive  information on Musielak-Orlicz spaces reader is sent to \cite{Chen, CUF, DHHR, HH, HH1979, Kam1998, KamKub,  Kamzyl, Mus, zyl}. Recall the following auxiliary facts on $MO$ spaces.

\begin{theorem}\cite{Mus, zyl}\label{podstawowe wlasnosci}
Let $A$ be a Lebesgue measurable subset of $\R^d$ and $\Phi$ a $MO$ function on $A$. The following statements hold.
\begin{enumerate}
    \item[$(1)$] A sequence $\lbrace f_n\rbrace_{n=1}^\infty$  is convergent in norm  to $f$ in $L^\Phi(A)$, if and only if for every $\lambda>0$, $\lim\limits_{n\to\infty} I_\Phi(\lambda (f-f_n))=0$.    
    \item[$(2)$] The MO space $L^\Phi(A)$ equipped with the Luxemburg norm $\| \cdot\| _\Phi$ is a Banach function space and $E^\Phi(A)$ is a closed subspace of $L^\Phi(A)$. 
    \item[$(3)$] If $\{f_n\}_{n=1}^\infty\subset L^\Phi(A)$ converges to $0$ in the norm, then it converges to $0$ in measure on sets of finite measure.
 \item[$(4)$] $L^\Phi(A)=E^\Phi (A)$ if and only if 
    $\Phi$ satisfies $\Delta_2$ condition.
 \item[$(5)$]  The set $S(A)\cap E^\Phi(A)$ is dense in $(E^\Phi(A),\|\cdot\|_\Phi)$.  
    \end{enumerate}
\end{theorem}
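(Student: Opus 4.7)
The theorem gathers five standard facts about Musielak--Orlicz spaces; each admits a routine modular proof, and my plan is to carry them through using two workhorse identities: first, the scaling rule $\|g\|_\Phi \le a$ iff $I_\Phi(g/a) \le 1$, which follows from the infimum definition together with monotone convergence applied to $\Phi(x, |g|/\lambda)$ as $\lambda \downarrow \|g\|_\Phi$; and second, the bound $I_\Phi(g) \le \|g\|_\Phi$ whenever $\|g\|_\Phi \le 1$, obtained from convexity (which yields $\Phi(x, c s) \le c \Phi(x, s)$ for $c \in [0,1]$ since $\Phi(x,0) = 0$).

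For part (1), if $\|f_n - f\|_\Phi \to 0$ then for any fixed $\lambda > 0$ eventually $\|\lambda(f_n - f)\|_\Phi < 1$, so by the second principle $I_\Phi(\lambda(f_n - f)) \le \|\lambda(f_n-f)\|_\Phi \to 0$. Conversely, applying the hypothesis with $\lambda = 1/\eps$ yields $\|f_n - f\|_\Phi \le \eps$ eventually. Part (2) breaks into the norm axioms (positive definiteness from (b), homogeneity from the infimum form, triangle inequality from convexity of $\Phi(x,\cdot)$), completeness by the Riesz--Fischer technique (extract a subsequence with $\sum \|f_{n_{k+1}} - f_{n_k}\|_\Phi < \infty$, use monotone convergence on $I_\Phi$ of the partial sums, recover the norm limit via (1)), and closedness of $E^\Phi$ via the convexity estimate $I_\Phi(\lambda f) \le \frac{1}{2} I_\Phi(2\lambda f_n) + \frac{1}{2} I_\Phi(2\lambda(f-f_n))$, where the first summand is finite because $f_n \in E^\Phi$ and the second is finite for large $n$ by (1). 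For part (3), fix a finite-measure set $E$ and $\delta > 0$; the level sets $E_k = \{x\in E : \Phi(x,\delta) \ge 1/k\}$ exhaust $E$ up to null sets by (b), and a Chebyshev bound gives $|\{x\in E_k : |f_n-f|>\delta\}| \le k\, I_\Phi(f_n - f) \to 0$.

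For part (4), sufficiency comes from iterating $\Delta_2$: one obtains $\Phi(x, 2^m t) \le C^m \Phi(x,t) + h_m(x)$ with $h_m \in L^1$, so $f \in L^\Phi$ forces $I_\Phi(\lambda f) < \infty$ for every $\lambda$. Necessity is a standard counterexample: when $\Delta_2$ fails, on disjoint measurable sets $A_k$ choose scalars $t_k$ with $\int_{A_k}\Phi(x,t_k)\, dx \le 2^{-k}$ but $\int_{A_k}\Phi(x,2t_k)\, dx \ge 1$, and set $f = \sum t_k \chi_{A_k}$; then $I_\Phi(f) \le 1$ while $I_\Phi(2f) = \infty$, so $f \in L^\Phi \setminus E^\Phi$. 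For part (5), given $f \in E^\Phi$ set $f_R = f\, \chi_{B(0,R) \cap \{|f|\le R\}}$; monotonicity of $\Phi$ dominates $\Phi(x,\lambda|f-f_R|)$ by $\Phi(x,\lambda|f|) \in L^1$, so dominated convergence together with (1) gives $\|f - f_R\|_\Phi \to 0$. Next, approximate each $f_R$ by simple $s_n$ with $|s_n| \le |f_R|$ and $s_n \to f_R$ pointwise: the bound $|s_n - f_R| \le 2|f_R|$ and DCT yield $I_\Phi(\lambda(s_n-f_R)) \to 0$ for every $\lambda$, hence $\|s_n - f_R\|_\Phi \to 0$ by (1), while $|s_n| \le |f|$ gives $I_\Phi(\lambda s_n) \le I_\Phi(\lambda f) < \infty$, so $s_n \in S(A) \cap E^\Phi$.

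The only real subtlety lies in part (5): because $\Phi$ is not assumed locally integrable, approximating the bounded $f_R$ by simple functions bounded merely by a constant can push the approximants out of $E^\Phi$; insisting on $|s_n|\le|f_R|$ keeps the modular dominated by $I_\Phi(\lambda f)$ and preserves membership. All other steps reduce to convexity of $\Phi$, monotonicity, and dominated or monotone convergence.
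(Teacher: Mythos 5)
This theorem is stated in the paper with citations to Musielak's monograph and \.Zyluk's dissertation and is not proved there, so there is no in-paper argument to compare against; what you have written is the standard textbook proof of these facts, and in outline it is correct. Parts (1), (3), and the sufficiency half of (4) are complete as sketched, and in part (5) you correctly isolate and resolve the only genuinely delicate point of the whole theorem in the non--locally-integrable setting: the approximating simple functions must be dominated by $|f|$ (not merely bounded) so that they remain in $E^\Phi(A)$, and the preliminary truncation to $B(0,R)\cap\{|f|\le R\}$ is needed so that their supports have finite measure. Two places are thinner than the rest. First, in (2) the claim is that $L^\Phi(A)$ is a \emph{Banach function space}, which requires more than the norm axioms and completeness: one should also verify the ideal property ($|g|\le|f|$ a.e.\ implies $\|g\|_\Phi\le\|f\|_\Phi$, immediate from monotonicity of $\Phi(x,\cdot)$) and the Fatou property; these are easy but are part of the statement. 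Second, and more substantively, in the necessity half of (4) the existence of disjoint sets $A_k$ and scalars $t_k$ with $\int_{A_k}\Phi(x,t_k)\,dx\le 2^{-k}$ and $\int_{A_k}\Phi(x,2t_k)\,dx\ge 1$ is the entire content of that direction: the negation of $\Delta_2$ only says that for every $C>0$ and every nonnegative $h\in L^1(A)$ the inequality $\Phi(x,2t)\le C\Phi(x,t)+h(x)$ fails somewhere on a set of positive measure, and extracting from this the sets $A_k$ (typically by first decomposing $A$ as in Lemma \ref{Kaminska} so that $\sup_{x\in A_n}\Phi(x,t)<\infty$, then running an exhaustion argument as in Musielak, Theorem 8.14) is a nontrivial step that you assert rather than derive. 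Since the result is classical this is acceptable as a citation, but as written it is the one genuine gap in the proposal.
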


It is well known that the space of smooth functions with compact supports is dense in the Lebesgue and Orlicz spaces on open subset of $\mathbb{R}^d$ \cite{Adams, Gossez}. In the case of Musielak-Orlicz spaces $L^\Phi(\Omega)$ there exist similar results but under some restrictions on  $\Phi$. Recall that  $\Phi$ is said to be locally integrable whenever 
\[
\int_K \Phi(x,t)\,dx < \infty
\]
 for each $t\ge 0$ and every compact set $K\subset\Omega$.  Notice that under the above assumption, $C_C^\infty (\Omega) \subset L^\Phi(\Omega)$. In \cite{HHK,YA} it has been proved in this case  that $C_C^\infty (\Omega)$ is dense in $E^\Phi(\Omega)$.
 
 Our main goal here is to prove the similar result, namely density of $C_C^\infty (\Omega) \cap E^\Phi(\Omega)$ in $E^\Phi(\Omega)$ for Musielak-Orlicz functions $\Phi$ without additional assumptions, in particular without local integrability of $\Phi$. Observe that in general $C_C^\infty(\Omega)$ is not contained in $E^\Phi(\Omega)$. Applying  Lemma \ref{Kaminska} we see that we have enough simple functions belonging to $E^\Phi(\Omega)$ in order to get condition (5) of Theorem \ref{podstawowe wlasnosci}. However for density of smooth functions we will need additional assumption that $|\Sing \Phi| = 0$.
  
  A simple example of the $MO$ function not locally integrable  is the function $\Phi_1:\R\times [0,\infty)\to [0,\infty)$ given  by the formula
\[\Phi_1(x,t)=\begin{cases} \frac{t}{|x|} & t\geq 0, \ x\neq 0  \\ 
0 & t\geq 0, \ x=0.\end{cases}\]
Clearly it is not locally integrable on any compact set $K$ consisting of the point $0$.
 
 A slightly more involved function that is not locally integrable can be constructed as follows. Let $\{r_n\}_{n=1}^\infty$ be an enumeration of rational numbers from the interval $(0,1)$. For any natural number $n$ and any $x\in (0,1)$ we set
 \[g_n(x)=\begin{cases} \frac{1}{x-r_n} &    x> r_n \\
0 &  x\leq r_n.\end{cases}\]
Notice that for any $n\in \N$ the function $g_n$ is not integrable on any interval containing $r_n$, but the function $\sqrt{g_n}$ is an element of $L^1(0,1)$ and $\| \sqrt{g_n}\|_1 = \int_0^1 \sqrt{g_n (x)} \,dx \leq \frac{2}{3}$ for every $n\in\mathbb{N}$. Hence $g(x)=\sum\limits_{n=1}^\infty\frac{\sqrt{g_n(x)}}{2^n}$ is an element of $L^1(0,1)$. Therefore there exists a subset $B$ of $(0,1)$ of measure $0$ such that for any $x\in (0,1)\setminus B$, $g(x)$ is finite.
Notice now that for any $x\in (0,1)\setminus B$ we have that 
\[w(x): =\sum\limits_{n=1}^\infty\frac{g_n(x)}{4^n}\leq g(x)^2<\infty.\]
We define now the function $\Phi_2: \R\times [0,\infty)\to [0,\infty)$ by the formula 
\[\Phi_2(x,t)=\begin{cases}t w(x) &  \  x\in (0,1)\setminus B, \ t\geq 0\\
0 & x\in B, \ t \geq 0 \\
t & x\in \R\setminus (0,1), \  t\geq 0 .\end{cases},\]
By construction, the function $\Phi_2$ is not  locally integrable $MO$ function. In fact $w$ is not integrable on any open subinterval of $(0,1)$.
It is clear that both $\Phi_1$ and $\Phi_2$ satisfy the $\Delta_2$ condition. Hence by Theorem \ref{podstawowe wlasnosci} (5), simple functions that belong to $L^{\Phi_i} (\R)$ are dense in $L^{\Phi_i} (\R)$, for $i=1,2$.  But what about density of compactly supported smooth functions belonging to the space? It turns out that they are dense in  $L^{\Phi_1} (\R)$ but not in $L^{\Phi_2} (\R)$. The discussion below explains why that is the case and characterizes those $MO$ functions  for which elements of $C_C^\infty (\Omega)\cap E^\Phi(\Omega)$ are dense in $E^\Phi(\Omega)$.

 We start with the following definition.
\begin{definition}
Let $\Omega\subset \R^d$ be an open set and $\Phi$ be a MO function defined on $\Omega$. We define the set $\Sing \Phi$, called the set of singular points of $\Phi$, as
\[ \Sing \Phi= \lbrace x\in \R^d:\ \forall r>0 \ \exists t_r>0  \int\limits_{B(x,r)\cap \Omega}\Phi(y,t_r)dy=\infty\rbrace.\] 
\end{definition}

Clearly,  if $\Phi$ on  $\Omega\subset\R^d$ is locally integrable then $|\Sing \Phi| =0$. However the converse implication is not satisfied in view of $|\Sing \Phi_1|=0$ and $\Phi_1$ being not locally integrable. 

We also notice that  for $\Phi_1$ and $\Phi_2$  we have
\[\Sing \Phi_1= \{0\},\]
\[\Sing \Phi_2= [0,1],\]
which implies  that both of those sets are closed. This is not a coincidence and the following proposition holds.
\begin{proposition}\label{dom}
For any open set $\Omega\subset \R^d$ and MO  function $\Phi$ defined on $\Omega$, the set $\Sing \Phi$ is a closed subset of $\R^d$.
\end{proposition}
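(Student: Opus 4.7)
The plan is to show that the complement $\R^d \setminus \Sing \Phi$ is open. Unpacking the definition, a point $x$ lies outside $\Sing \Phi$ precisely when there exists some $r_0 > 0$ such that for every $t > 0$,
\[
\int_{B(x,r_0) \cap \Omega} \Phi(y,t)\, dy < \infty.
\]
So I start with an arbitrary $x \notin \Sing \Phi$, fix a witnessing radius $r_0$, and aim to find a neighborhood of $x$ that is entirely outside $\Sing \Phi$.

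The natural choice is the open ball $B(x, r_0/2)$. For any $z \in B(x, r_0/2)$, set $r_z = r_0/2$. By the triangle inequality, $B(z, r_z) \subset B(x, r_0)$, and therefore $B(z, r_z) \cap \Omega \subset B(x, r_0) \cap \Omega$. Since $\Phi(y,t) \ge 0$ everywhere, monotonicity of the integral gives, for every $t > 0$,
\[
\int_{B(z, r_z) \cap \Omega} \Phi(y,t)\, dy \le \int_{B(x, r_0) \cap \Omega} \Phi(y,t)\, dy < \infty.
\]
Hence $z$ fails the defining condition of $\Sing \Phi$ at the radius $r_z$, which certifies $z \notin \Sing \Phi$.

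There is no real obstacle; the argument is purely point-set topological and uses only the monotonicity of the integral of the nonnegative function $\Phi(\cdot,t)$ together with the fact that smaller balls sit inside larger ones. The only mildly subtle point is keeping the quantifiers straight: being outside $\Sing \Phi$ requires only one bad radius $r$ at which the integral stays finite for \emph{all} $t$, which is exactly what we need in order to transfer good behavior from $x$ to all nearby $z$. This yields $B(x, r_0/2) \subset \R^d \setminus \Sing \Phi$, so the complement is open and $\Sing \Phi$ is closed.
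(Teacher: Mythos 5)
Your proof is correct and uses essentially the same mechanism as the paper's: nesting a half-radius ball inside the witnessing ball via the triangle inequality and invoking monotonicity of the integral of the nonnegative function $\Phi(\cdot,t)$. The only difference is that you show the complement of $\Sing\Phi$ is open directly, while the paper verifies sequential closedness of $\Sing\Phi$; these are contrapositive formulations of the same argument.
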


\begin{proof}
Let $x\in \R^d$, $\lbrace x_n\rbrace_{n=1}^\infty\subset  \text{Sing } \Phi$ and $\lim\limits_{n\to\infty}x_n=x$. We will show that $x\in \text{Sing } \Phi$. For any $r>0$, then there exists $N\in\N$ such that for all $n> N$, $|x-x_n|<\frac{r}{2}$. Notice that, for each $n>N$ we have $B(x_n,\frac{r}{2})\subset B(x,r)$. 
For any $n>N$, by definition of $\text{Sing } \Phi$ there exists $t_{\frac{r}{2}}>0$ such that
\[ \int\limits_{B(x_n,\frac{r}{2})\cap\Omega}\Phi(y,t_{\frac{r}{2}})dy=\infty,\]
therefore
\[ \int\limits_{B(x,r)\cap\Omega}\Phi(y,t_{\frac{r}{2}})dy\geq\int\limits_{B(x_n,\frac{r}{2})\cap\Omega}\Phi(y,t_{\frac{r}{2}})dy=\infty.\]
Since $r$ was arbitrary chosen, we conclude that $x\in \text{Sing }\Phi$. 

\end{proof}

 Now we show that for density of $C_C^\infty (\Omega)\cap E^\Phi(\Omega)$ in $E^\Phi(\Omega)$, the set $\Sing\Phi$ has to be of zero measure. We will need the following result.
 
 \begin{lemma}\cite[p. 64]{zyl, Kam1984}\label{Kaminska}
Let $A\subset \R^d$ be measurable and $\Phi$ be a MO function on $A$. There exists a sequence of pairwise disjoint set $\lbrace A_n\rbrace_{n=1}^\infty\subset A$ such that   for each $n\in \N, |A_n|<\infty$, $\left|A\setminus\bigcup\limits_{n=1}^\infty A_n\right|=0$,  and 
\[
\sup\limits_{x\in A_n}\Phi(x,t)<\infty,\ \ \ \ n\in \N, \ \ t\geq 0.
\]
Consequently $L^\infty(A|_{A_n}) \subset E^\Phi(A|_{A_n})$ for every $n\in\N$.
\end{lemma}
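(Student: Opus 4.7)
The plan is: first use $\sigma$-finiteness of Lebesgue measure to reduce the problem to one of finite measure, then, on each finite-measure piece, build a countable partition on which $\Phi(\cdot,m)$ is simultaneously bounded for every positive integer $m$, and finally pass from integer levels $m$ to arbitrary $t\ge 0$ via monotonicity of $\Phi(x,\cdot)$.

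Write $A=\bigsqcup_{k}C_k$ with $|C_k|<\infty$ for each $k$. Fix $k$ and observe that, by Definition~\ref{def}, for each $m\in\N$ the function $x\mapsto\Phi(x,m)$ is real-valued and Lebesgue measurable on $C_k$, so $|\{x\in C_k:\Phi(x,m)>N\}|\to 0$ as $N\to\infty$. For each $m$ choose $N_m$ such that
\[
\bigl|\{x\in C_k:\Phi(x,m)>N_m\}\bigr|<\tfrac{1}{2}|C_k|\,2^{-m},
\]
and set $A_{k,1}=C_k\setminus\bigcup_{m=1}^\infty\{x\in C_k:\Phi(x,m)>N_m\}$. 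Then $|C_k\setminus A_{k,1}|\le\tfrac12|C_k|$ and $\sup_{x\in A_{k,1}}\Phi(x,m)\le N_m<\infty$ for every $m\in\N$. Repeating the same construction inside $C_k\setminus A_{k,1}$, then inside $C_k\setminus(A_{k,1}\cup A_{k,2})$, and so on, produces pairwise disjoint $\{A_{k,j}\}_{j=1}^\infty$ with the simultaneous-boundedness property, whose complement in $C_k$ has geometrically vanishing measure, hence is null. Re-indexing the double sequence $\{A_{k,j}\}_{k,j}$ as $\{A_n\}_{n=1}^\infty$ yields the desired partition.

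To obtain the bound for a general $t\ge 0$, note that convexity of $\Phi(x,\cdot)$ together with $\Phi(x,0)=0$ and positivity on $(0,\infty)$ force $\Phi(x,\cdot)$ to be nondecreasing, so choosing any $m\in\N$ with $m\ge t$ yields $\sup_{x\in A_n}\Phi(x,t)\le\sup_{x\in A_n}\Phi(x,m)<\infty$. The final inclusion $L^\infty(A|_{A_n})\subset E^\Phi(A|_{A_n})$ is then immediate: if $|f|\le M$ on $A_n$, then for any $\lambda>0$, monotonicity gives $I_\Phi(\lambda f)\le|A_n|\sup_{x\in A_n}\Phi(x,\lambda M)<\infty$, hence $f\in E^\Phi$. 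The only mildly delicate point in the whole argument is the bookkeeping in the diagonal construction, which is what forces both the $2^{-m}$-weighting of tolerances within each step and the outer geometric iteration over $j$ needed to exhaust $C_k$.
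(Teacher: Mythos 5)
Your argument is correct and complete. Note that the paper itself gives no proof of this lemma --- it is quoted from \cite{zyl, Kam1984} --- so there is nothing internal to compare against; what you have written is a legitimate self-contained substitute. The key points all check out: $\Phi(x,m)$ is finite-valued by Definition~\ref{def}, so on a piece $C_k$ of finite measure the sets $\{x\in C_k:\Phi(x,m)>N\}$ shrink to a null set as $N\to\infty$ by continuity from above; the $2^{-m}$-weighted truncation then yields a subset $A_{k,1}$ on which all the functions $\Phi(\cdot,m)$, $m\in\N$, are simultaneously bounded while removing at most half the measure; the outer iteration over $j$ makes the residual set $\bigcap_j R_j$ null; and the passage from integer $t$ to arbitrary $t\ge 0$ via the monotonicity of $\Phi(x,\cdot)$ (which indeed follows from convexity together with $\Phi(x,0)=0$ and nonnegativity) is exactly what is needed, as is the one-line verification of $L^\infty(A|_{A_n})\subset E^\Phi(A|_{A_n})$. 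The only cosmetic gap is the degenerate case $|R_{j-1}|=0$ in the iteration, where the strict inequality defining $N_m$ cannot be met; there one simply stops, or takes all subsequent $A_{k,j}$ empty, since $C_k$ is already exhausted up to a null set. You should also say explicitly that the threshold $N_m$ is re-chosen at each stage $j$ (it depends on $R_{j-1}$), but that is bookkeeping, not a mathematical issue.
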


\begin{theorem}\label{koniecznosc sing}

 Let $\Omega\subset \R^d$ be open and $\Phi$ be a MO function on $\Omega$.  If $|\Sing\Phi|>0$, then $C_C^\infty(\Omega)\cap E^\Phi (\Omega)$ is not dense in $E^\Phi(\Omega)$.
\end{theorem}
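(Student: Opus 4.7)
The strategy is to produce a function $f\in E^\Phi(\Omega)$ that cannot be approximated in the Luxemburg norm by any sequence in $C_C^\infty(\Omega)\cap E^\Phi(\Omega)$. The candidate will be $f=\chi_D$ for a carefully chosen set $D\subset\Sing\Phi\cap\Omega$ with $0<|D|<\infty$.

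The key technical step is a pointwise vanishing property: every $g\in C_C^\infty(\Omega)\cap E^\Phi(\Omega)$ satisfies $g(x)=0$ for every $x\in\Sing\Phi\cap\Omega$. To see this, suppose instead that $g(x_0)\neq 0$ for some $x_0\in\Sing\Phi\cap\Omega$. Since $\Omega$ is open and $g$ is continuous, there exists $r>0$ with $B(x_0,r)\subset\Omega$ and $|g(y)|\geq |g(x_0)|/2=:c>0$ on $B(x_0,r)$. By the definition of $\Sing\Phi$, choose $t_r>0$ with $\int_{B(x_0,r)}\Phi(y,t_r)\,dy=\infty$. Picking $\lambda>t_r/c$, we have $\lambda|g(y)|>t_r$ on $B(x_0,r)$; since $t\mapsto\Phi(y,t)$ is nondecreasing (being convex with $\Phi(y,0)=0$), it follows that $I_\Phi(\lambda g)\geq\int_{B(x_0,r)}\Phi(y,t_r)\,dy=\infty$, contradicting $g\in E^\Phi(\Omega)$.

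Next I would locate $D$. Since $|\Sing\Phi|>0$ and $\Sing\Phi\subset\overline{\Omega}$ (points outside $\overline{\Omega}$ have small balls disjoint from $\Omega$), one reduces to the case $|\Sing\Phi\cap\Omega|>0$, which is the only portion of $\Sing\Phi$ that can interact with functions on $\Omega$. Apply Lemma \ref{Kaminska} to $A=\Omega$ to obtain pairwise disjoint $\{A_n\}\subset\Omega$ with $|A_n|<\infty$, $|\Omega\setminus\bigcup_n A_n|=0$, and $L^\infty(A_n)\subset E^\Phi(A_n)$. Choose $n$ with $|\Sing\Phi\cap A_n|>0$ and set $D:=\Sing\Phi\cap A_n$; then $\chi_D\in L^\infty(A_n)\subset E^\Phi(\Omega)$.

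Finally suppose $\{g_k\}\subset C_C^\infty(\Omega)\cap E^\Phi(\Omega)$ satisfies $\|g_k-\chi_D\|_\Phi\to 0$. By Theorem \ref{podstawowe wlasnosci}(3), $g_k\to\chi_D$ in measure on the finite-measure set $A_n$; passing to a subsequence, $g_k\to\chi_D$ almost everywhere on $A_n$, and in particular on $D$. But the vanishing property forces $g_k\equiv 0$ on $D\subset\Sing\Phi\cap\Omega$, while $\chi_D\equiv 1$ on $D$ with $|D|>0$, a contradiction. The principal obstacle is the vanishing property of the second paragraph: the work lies in coupling the global analytic hypothesis $g\in E^\Phi(\Omega)$ with the continuity of $g$ to extract pointwise information at every singular point interior to $\Omega$. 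The remainder is routine measure-theoretic bookkeeping combined with Lemma \ref{Kaminska} and Theorem \ref{podstawowe wlasnosci}(3).
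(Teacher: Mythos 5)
Your proof is correct and rests on the same two pillars as the paper's argument: Lemma \ref{Kaminska} is used to manufacture a characteristic function of a positive-measure subset of $\Sing\Phi\cap\Omega$ that lies in $E^\Phi(\Omega)$, and Theorem \ref{podstawowe wlasnosci}(3) upgrades norm convergence to a.e.\ convergence along a subsequence. Where you differ is in how the contradiction is extracted. The paper fixes a single point $x_0\in K$ at which the approximants converge to $1$, picks one approximant $f_{k_0}$ with $|f_{k_0}|>\frac14$ on a ball $B(x_0,r)$, and plays $\chi_{B(x_0,r)}\in E^\Phi(\Omega)$ against $x_0\in\Sing\Phi$. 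You instead isolate the underlying mechanism as a standalone vanishing lemma: every continuous $g\in E^\Phi(\Omega)$ vanishes identically on $\Sing\Phi\cap\Omega$; your argument for this (continuity, monotonicity of $\Phi(y,\cdot)$ from convexity together with $\Phi(y,0)=0$, and the definition of $\Sing\Phi$) is sound. This makes the endgame cleaner --- the approximants are forced to equal $0$ on all of $D$, so they cannot converge to $\chi_D\equiv 1$ a.e.\ on a set of positive measure --- and it proves slightly more, namely that the closure of $C_C^\infty(\Omega)\cap E^\Phi(\Omega)$ consists of functions vanishing a.e.\ on $\Sing\Phi\cap\Omega$. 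One shared caveat: both you and the paper pass from $|\Sing\Phi|>0$ to $|\Sing\Phi\cap\Omega|>0$ (you call it a ``reduction''; the paper does it silently when selecting $n_0$ with $|\Sing\Phi\cap\Omega_{n_0}|>0$). Since $\Sing\Phi\subset\overline{\Omega}$ and $\partial\Omega$ may have positive Lebesgue measure, this step is not automatic, but you are no worse off than the source on this point.
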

\begin{proof}
We argue by contradiction. Assume that $C_C^\infty(\Omega)\cap E^\Phi (\Omega)$ is dense in $E^\Phi(\Omega)$ and $|\text{Sing }\Phi|>0$. Let $\lbrace \Omega_n \rbrace_{n=1}^\infty$ be a sequence of sets such that $\Omega_n=A_n$, where $\{A_n\}_{n=1}^\infty$ is the sequence of sets from the conclusion of Lemma \ref{Kaminska}. Since $\left|\Omega\setminus\bigcup\limits_{n=1}^\infty \Omega_n\right|=0$, there exists $n_0$ such that $|\Sing \Phi\,\cap \,\Omega_{n_0}|>0$. By inner regularity of the Lebesgue measure there exists a compact set $K\subset \Sing \Phi\cap \Omega_{n_0}$ such that $0<|K|<|\Sing \Phi\cap \Omega_{n_0}|$. For any $t>0,$
\[\int\limits_{K} \Phi(x,t)dx\leq\int\limits_{\Omega_{n_0}}\Phi(x,t)dx<\infty,\]
therefore $\chi_{K}\in E
^\Phi(\Omega)$. By density of $C_C^\infty(\Omega)\cap E^\Phi (\Omega)$ in $E^\Phi(\Omega)$ there exists a sequence of functions $f_n\in C_C^\infty(\Omega)\cap E^\Phi (\Omega)$ such that, 
\[\lim\limits_{n\to\infty} \| \chi_K - f_n\| _\Phi=0.\]
Therefore, by Theorem \ref{podstawowe wlasnosci} (3), $\{\chi_K - f_n\}_{n=1}^\infty$ converges to $0$ in measure for any set $A\subset\Omega$ with $|A|<\infty$.
 It follows that there exists a subsequence $\lbrace n_k \rbrace_{k=1}^\infty\subset \N$ such that $\lim\limits_{k\to\infty}\chi_K(x) - f_{n_k}(x)=0$ for a.a. $ x\in\Omega$. For convenience  let us rewrite $f_{n_k}$ as $f_k$. Since $|K|>0$ there exists $x_0\in K$ such that $\lim\limits_{k\to\infty} f_k(x_0)=1$. 
Take now $k_0$ such that 
\[\frac{1}{2}<f_{k_0}(x_0)<\frac{3}{2}.\]
By continuity of $f_{k_0}$, there exists a ball $B(x_0,r)$, such that for any $y\in B(x_0,r)$,
\[-\frac{1}{4}<f_{k_0}(y)-f_{k_0}(x_0)<\frac{1}{4}.\]
Combining both of the above inequalities, we get that for any $y\in B(x_0,r)$,
\[\frac{1}{4}<|f_{k_0}(y)|<\frac{7}{4},\]
therefore $\frac{1}{4}\chi_{B(x_0,r)}<|f_{k_0}|$, so $\chi_{B(x_0,r)}\in E^\Phi(\Omega)$. Hence, for each $\lambda>0$ we have
\[\int\limits_{B(x_0,r)}\Phi(y,\lambda)dy<\infty.\]
On the other hand, $x_0\in K\subset \Sing\Omega$, and therefore there exists a $t_{r}>0$ such that
\[\int\limits_{B(x_0,r)}\Phi(y,t_r)=\infty,\]
and so $\chi_{B(x_0,r)}\notin E^\Phi(\Omega)$, a contradiction.
\end{proof}

To prove that the condition $|\Sing\Phi|=0$ is sufficient for density of $C_C^\infty(\Omega)$ functions in $E^\Phi(\Omega)$ we need the following fact about compact sets. 

\begin{lemma}\label{nested open sets}
Let $K$ be a compact subset of $\R^d$. There exists a sequence of open, bounded sets $\lbrace U_n\rbrace_{n=1}^\infty$ such that, for each $n\in \N$,
\begin{enumerate}
    \item[$(1)$]   $K\subset U_n$,
    \item[$(2)$]  $|\overline{U_n}|=|U_n|<|K|+\frac{1}{n}, $
    \item[$(3)$]  $\overline {U_{n+1}}\subset U_n$, $|U_{n+1}|<|U_n|.$
\end{enumerate}
\end{lemma}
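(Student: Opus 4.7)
The plan is to construct the $U_n$'s inductively, relying on two ingredients: outer regularity of Lebesgue measure and a finite-subcover argument from compactness of $K$. The key subroutine I will use repeatedly is the following: given any open $V \supset K$, for each $x \in K$ choose $r_x > 0$ with $\overline{B(x, r_x)} \subset V$ (possible since $V$ is open), and by compactness extract a finite subcover; then let $U$ be the union of the selected open balls. This $U$ is bounded and open with $K \subset U$, its closure $\overline{U}$ equals the union of the corresponding closed balls and therefore satisfies $\overline{U} \subset V$, and $\partial U$ is contained in a finite union of spheres, hence has Lebesgue measure zero, giving $|\overline{U}| = |U|$.

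Outer regularity yields, for every $\varepsilon > 0$, an open set $V' \supset K$ with $|V'| < |K| + \varepsilon$; intersecting with any preassigned open neighborhood $W$ of $K$ preserves both containment of $K$ and the measure bound. Combining this with the subroutine above, for every $\varepsilon > 0$ and every open $W \supset K$ I can manufacture a bounded open $U$ with $K \subset U$, $\overline{U} \subset W$, and $|\overline{U}| = |U| < |K| + \varepsilon$.

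For the induction, take $U_1$ from the construction with $\varepsilon = 1$ and $W = \R^d$. Given $U_n$, observe that if $K \neq \emptyset$ then $K \subsetneq U_n$ (a nonempty compact subset of $\R^d$ is never open), so the open set $U_n \setminus K$ is nonempty and $\beta_n := |U_n| - |K| > 0$. Applying the construction with $\varepsilon_{n+1} := \min\bigl(1/(n+1),\, \beta_n/2\bigr)$ and $W = U_n$ produces $U_{n+1}$ satisfying $K \subset U_{n+1}$, $\overline{U_{n+1}} \subset U_n$, $|\overline{U_{n+1}}| = |U_{n+1}| < |K| + 1/(n+1)$, and $|U_{n+1}| < |K| + \beta_n/2 < |U_n|$, which is exactly what is required to continue the induction.

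The only genuine subtlety is maintaining the strict decrease $|U_{n+1}| < |U_n|$ at every step, which demands a measure gap between $U_n$ and $K$; this gap is supplied by the elementary topological observation that $U_n \setminus K$ is a nonempty open set whenever $K$ is nonempty. The degenerate case $K = \emptyset$ can be dispatched separately by taking $U_n = B(0, r_n)$ with $r_n \downarrow 0$ chosen fast enough that (2) and (3) hold trivially.
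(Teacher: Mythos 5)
Your proof is correct and follows essentially the same route as the paper's: an inductive construction combining outer regularity of Lebesgue measure with a finite subcover of $K$ by basic open sets (you use balls where the paper uses cubes) whose closures lie inside the previously constructed $U_n$ and whose boundaries are Lebesgue-null, giving $|\overline{U_{n+1}}|=|U_{n+1}|$. The only divergences are minor: you obtain the strict decrease $|U_{n+1}|<|U_n|$ by shrinking $\varepsilon$ below $(|U_n|-|K|)/2$ instead of invoking, as the paper does, that a compact subset of an open set of finite measure has strictly smaller measure, and you explicitly dispose of the degenerate case $K=\emptyset$, which the paper leaves implicit.
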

\begin{proof}
Let $K\subset \R^d$ be compact. We will construct the sequence $\lbrace U_n\rbrace_{n=1}^{\infty}$ by induction.

 Let $\lbrace Q_{1,n}\rbrace_{n=1}^\infty$ be a sequence of open cubes such that $\left|\bigcup\limits_{n=1}^\infty Q_{1,n}\right|<|K|+1$ and $K\subset\bigcup\limits_{n=1}^\infty Q_{1,n}$.
By compactness of $K$, there exists $M_1$ cubes, $\lbrace Q_{1,n}\rbrace_{n=1}^{M_1}$, such that  $K\subset\bigcup\limits_{n=1}^{M_1} Q_{1,n}$.
Notice that $\bigcup\limits_{n=1}^{M_1} Q_{1,n}$ can be written as a finite union of disjoint cuboids $\lbrace R_j\rbrace_{j=1}^{J_1}$ (a cuboid is a finite intersection of open cubes),
\[\bigcup\limits_{n=1}^{M_1} Q_{1,n}=\bigcup\limits_{j=1}^{J_1} R_j.\]
Moreover for each $1\leq j\leq J_1$, $|\overline{R_j}|=|R_j|$. Therefore
\[\left|\overline{\bigcup\limits_{n=1}^{M_1} Q_{1,n}}\right|=\left|\overline{\bigcup\limits_{j=1}^{J_1} R_j}\right|= \left|\bigcup\limits_{j=1}^{J_1}\overline{R_j}\right|\leq \sum\limits_{j=1}^{J_1}\left|\overline{R_j}\right|=\sum\limits_{j=1}^{J_1}\left|R_j\right|=\left|\bigcup\limits_{j=1}^{J_1} R_j\right|=\left|\bigcup\limits_{n=1}^{M_1} Q_{1,n}\right|,\]
so 
\[\left|\overline{\bigcup\limits_{n=1}^{M_1} Q_{1,n}}\right|=\left|\bigcup\limits_{n=1}^{M_1} Q_{1,n}\right|.\]
Defining $U_1=\bigcup\limits_{n=1}^{M_1} Q_{1,n}$ we have $K\subset U_1$, $|\overline{U_1}|=|U_1|$ and $|U_1|< |K|+1$.

 Assume that the set $U_n$ is constructed. Now we will construct the set $U_{n+1}$. First notice  that $\overline{U_n}$ is compact, therefore the boundary of $U_n$, $\bd (U_n)$ is also compact.

 Let $f(x)=\dist (x,\bd (U_n))$ be the distance of $x$ from $\bd(U_n)$. The function $f$ is continuous on $\R^d$ and for any $x\in K$, $f(x)>0$. By compactness of  $K$, there exists $x_0\in K$ such that $\min\limits_{x\in K} f(x)=f(x_0)=\alpha>0$, so for any $x\in K$,
\[\dist (x,\bd (U_n))\geq \alpha.
\]
Let us introduce  the following  family of open cubes,
\[
\mathcal{Q}_\alpha(K)=\left\lbrace Q(x,l):\ x\in K, \ l\leq  \frac{\alpha}{\sqrt d}\right\rbrace.
\]
Notice that, if $Q\in\mathcal{Q}_\alpha (K)$, then $\overline{Q}\subset U_n$. Indeed, let $Q=Q(x,l)$ for some $(x_1,\dots,x_d)=x\in K$ and $l\leq\frac{\alpha}{\sqrt{d}}$. If $(y_1,\dots,y_d)=y\in \overline{Q}$, then for any $i=1,\dots, d$ we have
\[ 
|x_i-y_i|\leq\frac{l}{2},
\]
and so
\[|x-y|\leq\frac{l\sqrt{d}}{2}\leq \frac{\alpha}{2}.\]
For any $z\in \R^d\setminus U_n$ we have
\[
|z-x|\geq \dist (x,\bd (U_n))\geq \alpha,
\]
and thus
\[
|z-y|\geq ||z-x|-|x-y||\geq |z-x|-|x-y|\geq \alpha-\frac{\alpha}{2}=\frac{\alpha}{2}.
\]
Therefore $y\notin \R^d\setminus U_n$ and so $y\in U_n$, i.e. $\overline{Q}\subset U_n$. Since $\overline{Q}$ is closed and $U_n$ is open we have that $\overline{Q}\cap\bd (U_n)=\emptyset$. For each $x\in K$ denote
\[\mathcal{Q}_\alpha(x)=\left\lbrace Q(x,l): \ l\leq\frac{\alpha}{\sqrt d}\right\rbrace.\]
Clearly $\mathcal{Q}_\alpha (x)\subset \mathcal{Q}_\alpha (K)$ and $\mathcal{Q}_\alpha(x)$ forms a topological basis for neighborhoods of $x$ in the Euclidean topology of $\R^d$. Take now any open set $U$, such that $K\subset U$. For each $x\in K$, there exists $l_x>0$ with $Q(x,l_x)\in \mathcal{Q}_\alpha(x)$, such that $Q(x,l_x)\subset U$.

 Let $U$ be an open set, such that $K\subset U$ and $|U|<|K|+ \frac{1}{n+1}$. Then there exists a family $\left\lbrace Q(x,l_x)\right\rbrace_{x\in K}\subset \mathcal{Q}_\alpha (K)$ such that $\bigcup\limits_{x\in K}Q(x,l_x)\subset U $. By compactness of $K$, there exists $M_{n+1}\in\N$ and a finite subfamily $\left\lbrace Q(x_m,l_{x_m})\right\rbrace_{m=1}^{M_{n+1}}$ such that $K\subset\bigcup\limits_{m=1}^{M_{n+1}}Q(x_m,l_{x_m})$.
Setting $Q_{m,n+1}:=Q(x_m,l_{x_m})$,  $K\subset \bigcup\limits_{m=1}^{M_{n+1}}Q_{m,n+1}$ and $\left| \bigcup\limits_{m=1}^{M_{n+1}}Q_{m,n+1}\right|<|U|<|K|+\frac{1}{n+1}$.
Since $Q_{m,n+1}\in\mathcal{Q}_\alpha (K)$, we have
\[\overline{Q_{m,n+1}}\cap\bd (U_n)=\emptyset.\]
Define $U_{n+1}=\bigcup\limits_{m=1}^{M_{n+1}}Q_{m,n+1}$. Arguing as in case $n=1$, we get that
\[\left|\overline {U_{n+1}} \right|=\left| U_{n+1}\right|.\]
For every $m=1,\dots, M_{n+1}$, we have $\overline{Q_{m,n+1}}\subset U_n$. Therefore 
\[\overline{U_{n+1}}=\bigcup\limits_{m=1}^{M_{n+1}}\overline{Q_{m,n+1}}\subset U_n.\]
Recall that for any open set $U\subset\R^d$ with $|U|<\infty$ and any compact set $F\subset U$ we have that $|F|<|U|$. Hence,
\[\left|\overline{U_{n+1}}\right|<\left|U_n\right|.\]
By induction, we have constructed a sequence of open, bounded sets $\lbrace U_n\rbrace_{n=1}^\infty$ with desired properties.
\end{proof}
We will need further the following classical result.

\begin{theorem}
\label{Urynsohns lemma}\cite[Theorem 2.6.1]{LC}
Let $\Omega\subset \R^d$ be open. For every open $U\subset\Omega$  and compact $K\subset U$ there exists  $f\in C_C^\infty (\Omega)$, $f:\Omega\to [0,1]$ such that $f|_K\equiv 1$ and $\supp f\subset U$.
\end{theorem}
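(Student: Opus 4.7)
The plan is to produce $f$ as the convolution of the indicator of an intermediate compact set with a standard smooth mollifier, so that smoothness comes from the mollifier, the value $1$ on $K$ comes from nesting $K$ strictly inside the intermediate set, and containment of the support in $U$ comes from controlling the widening caused by mollification. Accordingly I would first extract a positive separation by setting $\rho=\dist(K,\R^d\setminus U)$, with the convention $\rho=+\infty$ if $U=\R^d$. Continuity of $\dist(\cdot,\R^d\setminus U)$ and compactness of $K$ make $\rho$ strictly positive. I would then fix $\delta$ with $0<\delta<\rho/3$ and let $K_\delta=\{x\in\R^d:\dist(x,K)\le\delta\}$; this is a compact set sitting inside $U$ and containing an open neighborhood of $K$.

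Next I would manufacture a standard mollifier from the classical flat bump $\psi(t)=e^{-1/t}$ for $t>0$ and $\psi(t)=0$ otherwise. The radial lift $\eta_0(x)=\psi(\delta^2-|x|^2)$ is smooth, non-negative, and supported in $\overline{B(0,\delta)}$; normalizing by its integral gives $\eta\in C_C^\infty(\R^d)$ with $\eta\ge 0$, $\int_{\R^d}\eta=1$, and $\supp\eta\subset\overline{B(0,\delta)}$. I would then define
\[
f(x)=(\chi_{K_\delta}*\eta)(x)=\int_{\R^d}\chi_{K_\delta}(x-y)\,\eta(y)\,dy
\]
and verify the three required properties: smoothness and $0\le f\le 1$ come from standard convolution theory; for $x\in K$ the ball $B(x,\delta)\subset K_\delta$ forces $\chi_{K_\delta}(x-y)=1$ throughout $\supp\eta$, so $f(x)=\int\eta=1$; and $\supp f\subset K_\delta+\overline{B(0,\delta)}\subset\{x:\dist(x,K)\le 2\delta\}\subset U$, so $f$ has compact support inside $U$, which gives $f\in C_C^\infty(\Omega)$ since $\Omega$ is open and contains $U$.

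The only genuinely non-trivial ingredient is the existence of a non-trivial element of $C_C^\infty(\R^d)$, which rests on the classical flatness of $e^{-1/t}$ at $t=0$; once that bump is in hand, everything else reduces to elementary book-keeping with the distance function and the support calculus of convolutions. Two minor subtleties to keep in mind are the edge case $U=\R^d$, where $\rho$ is infinite and any fixed $\delta>0$ works, and the need to choose $\delta$ small enough that the two successive $\delta$-thickenings $K\to K_\delta\to K_\delta+\overline{B(0,\delta)}$ never leave $U$, which is exactly why the factor $1/3$ in $\delta<\rho/3$ is built in.
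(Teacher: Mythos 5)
Your proposal is correct. Note that the paper does not actually prove this statement; it is quoted as a classical result with a citation, so there is no internal proof to compare against. Your argument is the standard mollification construction: the separation $\rho=\dist(K,\R^d\setminus U)>0$, the intermediate compact thickening $K_\delta$, and the convolution $\chi_{K_\delta}*\eta$ with a bump of radius $\delta$ together give smoothness, the value $1$ on $K$, and $\supp f\subset\{x:\dist(x,K)\le 2\delta\}\subset U$; all steps check out, including the smoothness of $\psi(\delta^2-|x|^2)$ via the flatness of $e^{-1/t}$ at $0$ and the handling of the case $U=\R^d$. The only cosmetic remark is that $\delta<\rho/2$ already suffices, since the total enlargement of $K$ is $2\delta$; the factor $1/3$ is harmless but not needed.
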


 Now we can prove that if $|\Sing\Phi|=0$ and for some compact set $K$ the function $\chi_{K}\in L^\Phi(\Omega)$ then $\chi_K$ can be approximated by elements of $C_C^\infty(\Omega)$.

\begin{theorem}\label{zwarte}
Let $\Phi$ be a MO function defined on an open $\Omega\subset \R^d$. If $|\Sing\Phi|=0$, then for any compact $K\subset\Omega$ such that $\chi_K\in E^\Phi(\Omega)$, there exists a sequence $\{ f_n\}_{n=1}^\infty\subset C_C^\infty(\Omega)$, such that $\lim\limits_{n\to\infty}\|  f_n-\chi_K\| _\Phi=0 $.
\end{theorem}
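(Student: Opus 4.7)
The plan is to prove the stronger estimate that for every $\lambda,\eps>0$ there exists $f\in C_C^\infty(\Omega)$ with $I_\Phi(\lambda(f-\chi_K))<\eps$. Specializing to $\lambda=n$, $\eps=1/n$ yields a sequence $f_n$ satisfying $I_\Phi(n(f_n-\chi_K))<1/n$; convexity of $\Phi(x,\cdot)$ combined with $\Phi(x,0)=0$ then gives, for every fixed $\lambda>0$ and every $n\ge \lambda$, the bound $I_\Phi(\lambda(f_n-\chi_K))\le (\lambda/n)\,I_\Phi(n(f_n-\chi_K))<\lambda/n^{2}\to 0$, which by Theorem \ref{podstawowe wlasnosci}$(1)$ is exactly norm convergence.

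Given $\lambda,\eps>0$, I would construct $f$ in three steps. First, I peel away the singular part of $K$: since $\chi_K\in E^\Phi(\Omega)$, the function $\Phi(\cdot,\lambda)$ is integrable on $K$, and since $\Sing\Phi$ is closed (Proposition \ref{dom}) with $|\Sing\Phi|=0$, outer regularity of Lebesgue measure provides an open set $W\supset \Sing\Phi\cap K$ with $\int_{K\cap W}\Phi(x,\lambda)\,dx<\eps/2$. The set $K':=K\setminus W$ is then compact and disjoint from $\Sing\Phi$.

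Next, I enclose $K'$ in a good open set: for each $x\in K'$, since $x\notin\Sing\Phi$, there is $r_x>0$ with $\overline{B(x,r_x)}\subset\Omega$ and $\int_{B(x,r_x)}\Phi(y,t)\,dy<\infty$ for \emph{every} $t>0$. Compactness extracts a finite subcover whose union $V$ is an open bounded subset of $\Omega$ containing $K'$ with $\Phi(\cdot,\lambda)\in L^{1}(V)$. I then apply Lemma \ref{nested open sets} to $K'$ and intersect the resulting nested sequence with $V$ to obtain open bounded sets $U_n$ with $K'\subset U_n\subset V$ and $|U_n\setminus K'|\to 0$. By absolute continuity of the finite measure $A\mapsto \int_A\Phi(x,\lambda)\,dx$ on $V$, I fix $n$ large enough that $\int_{U_n\setminus K'}\Phi(x,\lambda)\,dx<\eps/2$. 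Finally, Theorem \ref{Urynsohns lemma} delivers $f\in C_C^\infty(\Omega)$ with $0\le f\le 1$, $f|_{K'}\equiv 1$ and $\supp f\subset U_n$. A pointwise case check shows $|f-\chi_K|\le \chi_{(K\cap W)\cup(U_n\setminus K)}$, and since $U_n\setminus K\subset U_n\setminus K'$ the two estimates combine to give $I_\Phi(\lambda(f-\chi_K))\le \int_{K\cap W}\Phi(x,\lambda)\,dx+\int_{U_n\setminus K'}\Phi(x,\lambda)\,dx<\eps$.

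The main obstacle is that $K$ itself may meet $\Sing\Phi$, so no open neighbourhood of $K$ can be a good set on which $\Phi(\cdot,\lambda)$ is integrable; one cannot therefore apply Urysohn directly and shrink the excess via absolute continuity. Step~1 circumvents this by discarding a thin open neighbourhood of the null compact set $\Sing\Phi\cap K$, at a cost controlled by absolute continuity of $\int_K\Phi(x,\lambda)\,dx$, thereby reducing the problem to the compact set $K'$, which lies safely inside the open set $\Omega\setminus\Sing\Phi$ where the standard Urysohn-plus-Lemma~\ref{nested open sets} argument does apply.
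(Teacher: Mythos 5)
Your proposal is correct, and it reaches the result by a genuinely different organization of the limiting argument, even though the geometric core is the same as the paper's (isolate the closed null set $\Sing\Phi$ inside a small open set, cover the remaining compact piece by finitely many balls on which $\Phi(\cdot,t)$ is integrable, and apply the smooth Urysohn lemma). The paper fixes approximating sets independently of $\lambda$ and verifies $I_\Phi(\lambda(f-\chi_K))\to 0$ for \emph{every} $\lambda$ by two applications of the Dominated Convergence Theorem (dominating by $\Phi(\cdot,\lambda\chi_K)$ and by $\Phi(\cdot,\lambda\chi_{V_n})$ with $\chi_{V_n}\in E^\Phi(\Omega)$), which forces it to pass to a.e.-convergent subsequences from convergence in measure and to finish with a diagonal extraction over the double index $(m,n)$. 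You instead prove a single quantitative estimate --- for each fixed $\lambda,\eps$ a smooth $f$ with $I_\Phi(\lambda(f-\chi_K))<\eps$ --- via the pointwise bound $|f-\chi_K|\le\chi_{(K\cap W)\cup(U_n\setminus K)}$ and absolute continuity of the finite measures $A\mapsto\int_A\Phi(x,\lambda)\,dx$ on $K$ and on $V$; the convexity inequality $\Phi(x,\tfrac{\lambda}{n}s)\le\tfrac{\lambda}{n}\Phi(x,s)$ then converts the diagonal choice $\lambda=n$, $\eps=1/n$ into modular convergence for all $\lambda$, hence norm convergence by Theorem \ref{podstawowe wlasnosci}(1). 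A further small difference is that you excise only a neighborhood of $\Sing\Phi\cap K$, whereas the paper shrinks neighborhoods of the larger compact set $\Sing\Phi\cap\overline{U}$. Your route buys a shorter proof with no subsequence extractions and no appeal to dominated convergence; the paper's buys a more standard measure-theoretic template at the cost of extra bookkeeping. All the individual steps you sketch (monotonicity of $\Phi(x,\cdot)$ from convexity and $\Phi(x,0)=0$, compactness of $K'=K\setminus W$, shrinking $r_x$ so that $B(x,r_x)\subset\Omega$) are sound.
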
\label{density}
\begin{proof}
Let $K\subset \Omega$ be a compact set such that $\chi_K\in E^\Phi(\Omega)$. Then there exists an open and bounded set $U$ such that $K\subset U$. Define
\[(\Sing\Phi)_U =\overline{\Sing\Phi\cap U}.\]
By Proposition \ref{dom}, $\Sing\Phi$ is closed. Hence,
\[(\Sing\Phi)_U =\overline{\Sing\Phi\cap U}=\Sing\Phi\cap\overline{U},\]
and $(\Sing\Phi)_U$ is compact.
By Lemma \ref{nested open sets}, there exists a sequence of open sets $\lbrace U_n\rbrace_{n=1}^\infty$, such that for every $n\in \N$,

\begin{enumerate}

    \item[$(1)$] $(\Sing\Phi)_U\subset U_n$,
    \item[$(2)$] $|\overline{U_n}|=|U_n|<\frac{1}{n}$,
    \item[$(3)$] $\overline{U_{n+1}}\subset U_n$ and $|U_{n+1}|<|U_n|$.
       
\end{enumerate}
     Take any $n\in\N$ and define $K_n=K\setminus U_n$. Clearly $K_n$ is compact and $K= (K\cap U_n) \cup K_n$. Setting $V_n=U\setminus\overline{U_{n+1}}$ we have that $K_n\subset V_n$ and $V_n$ is open.

     First we will show that $\chi_{V_n}\in E^\Phi(\Omega)$. Notice that \[V_n\subset\overline{U}\setminus U_{n+2}\subset \overline{U}\setminus(\overline{U}\cap\Sing\Phi)\subset \overline{U}\setminus\Sing\Phi.\]
   Hence, $V_n\cap\Sing\Phi\subset(\overline{U}\setminus U_{n+2})\cap\Sing\Phi =\emptyset$.
    By definition of $\Sing\Phi$, for each $x\in \overline{U}\setminus U_{n+2}$ there exists $r_x>0$ such that for all $\lambda>0$, \[\int\limits_{B(x,r_x)\cap\Omega}\Phi(y,\lambda)dy<\infty.\]
     On the other hand, since $V_n\subset \overline{U}\setminus U_{n+2} $ and $\overline{U}\setminus U_{n+2}$ is closed, so $\overline{V_n}\subset \overline{U}\setminus U_{n+2}$. Hence, for every $x\in\overline{V_n}$ here exists $r_x>0$ such that for all $\lambda>0$, 
     \[\int\limits_{B(x,r_x)\cap\Omega}\Phi(y,\lambda)dy<\infty.\]
  The family $\left\lbrace B(x,r_x)\right\rbrace_{x\in \overline{V_n}}$ is an open cover of $\overline{V_n}$, so by compactness of $\overline{V_n}$ there exists a finite subcover $\left\lbrace B(x_i,r_{x_i})\right\rbrace_{i=1}^k$ of $\overline{V_n}$. Then for any $\lambda>0$,
     \[\int\limits_\Omega,\Phi(x,\lambda\chi_{V_n}(x))dx\leq\frac{1}{k}\sum\limits_{i=1}^k\int\limits_{B(x_i,r_{x_i})\cap \Omega}\Phi(y,\lambda k)dy<\infty,\]
     and therefore $\chi_{V_n}\in E^\Phi(\Omega)$. Since $\chi_{K_n}\leq\chi_{V_n}$, we have $\chi_{K_n}\in E^\Phi(\Omega)$. Recall that $K_n=K\setminus U_n$ and $|U_n|<\frac{1}{n}$, hence $\lim\limits_{n\to\infty}\chi_K-\chi_{K_n}=0$ in measure. We can find a subsequence $\{n_k\}_{k=1}^\infty\subset \N$, such that $\lim\limits_{k\to\infty}\chi_{K_{n_k}}=\chi_K$ a.e.. Without loss of generality, assume that $\{\chi_{K_{n_k}}\}_{k=1}^\infty=\{\chi_{K_n}\}_{n=1}^\infty$. For any $\lambda>0$ we have $I_\Phi(\lambda\chi_K)<\infty$ and
     \[\Phi(x,\lambda(\chi_K(x)-\chi_{K_n}(x))\leq\Phi(x,\lambda_K(x))\text{ for a.e. }x\in\R^d.\]
Hence by the Lebesgue Dominated Convergence Theorem, 
 $\lim\limits_{n\to\infty}I_\Phi(\lambda(\chi_K-\chi_{K_n}))=0$
 so in view of Theorem \ref{podstawowe wlasnosci} (1),
 \begin{equation}\label{eq:1}
 \lim\limits_{n\to\infty}\| \chi_K-\chi_{K_n}\| _\Phi=0.
 \end{equation}
 For a fixed $n\in\N$, by Lemma \ref{nested open sets}, there exists a sequence of open sets $\left\lbrace W'_m\right \rbrace_{m=1}^\infty$ such that for every $m\in\mathbb N$,
     \begin{enumerate}
    \item[$(1')$]  $K_n\subset W'_m$,
    \item[$(2')$]$|\overline{W'_m}|=|W'_m|<|K_n|+\frac{1}{m}, $
    \item[$(3')$] $\overline {W'_{m+1}}\subset W'_m$.
 \end{enumerate}
Define now $W_m=W'_m\cap V_n$. 
For each $m\in \N$ we have $K_n\subset W_m$ and $W_m$ is open. Hence in view of Theorem \ref{Urynsohns lemma} there exists a function $f_{m,n}$, such that $f_{m,n}\in C_C^\infty(\Omega)$, $f_{m,n}:\Omega^d\to [0,1]$ and $f_{m,n}|_{K_n}\equiv 1$ and $\supp f_{m,n}\subset W_m$. Notice that 
$f_{m,n}\leq\chi_{W_m}$ a.e., and hence $\chi_{W_m}-f_{m,n}\leq\chi_{W_m\setminus K_n} $ a.e. For a fixed $\lambda>0$ and any $m\in\N$  by (2') we have
\begin{align*}
|\{x:\chi_{W_m}(x)-f_{m,n}(x)>\lambda\}|&\leq
|\{x:\chi_{W_m\setminus K_n}(x)>\lambda\}|\\
&\leq|W_m\setminus K_n|= |W_m|-|K_n|\\
&\leq |W'_m|-|K_n|<\frac{1}{m}.
\end{align*}
Therefore $\lim\limits_{m\to\infty}f_{m,n}= \chi_{K_n}$ in measure for every $n\in\N$. 
We can find a subsequence $\{m_k\}_{k=1}^\infty\subset \N$, such that  $\lim\limits_{k\to\infty}f_{m_k,n}=\chi_{K_n}$ a.e.
Without loss of generality, assume that $\{f_{m_k,n}\}_{k=1}^\infty=\{f_{m,n}\}_{m=1}^\infty$.
Since $f_{m,n}\leq\chi_{W_m}\leq\chi_{V_n} $ a.e. and $\chi_{V_n}\in E^\Phi(\Omega) $, so $f_{m,n}\in E^\Phi (\Omega)$. For any $\lambda>0$ we have $I_\Phi(\lambda\chi_{V_n})<\infty$ and
\[\Phi(x,\lambda (f_{m,n}(x)-\chi_{K_n}(x)))\leq\Phi (x, \lambda\chi_{V_n}(x))\text{ for a.e. }x\in\R^d.\]
By the Lebesgue Dominated Convergence Theorem  $\lim\limits_{m\to\infty}I_\Phi(\lambda (f_{m,n}-\chi_{K_n}))=0,$
 so, for every $n\in\N$ 
   \[\lim\limits_{m\to\infty}\|f_{m,n}-\chi_{K_n}\|_\Phi=0. \] 
Now, for every $n\in\N$ define $f_n=f_{m_n,n}$, where $m_n$ is the smallest integer such that 
 \begin{equation}\label{eq 3}
 \|f_{m_n,n}-\chi_{K_n}\|_\Phi<\frac{1}{n}.
  \end{equation}
Finally, by (\ref{eq:1}) and (\ref{eq 3}),
\[ \lim\limits_{n\to\infty}\| f_{n}-\chi_{K}\| _\Phi\leq \lim\limits_{n\to\infty}\| f_{n}-\chi_{K_n}\| _\Phi +\lim\limits_{n\to\infty}\| \chi_{K_n}-\chi_{K}\| _\Phi =0.\]
\end{proof}

Finally we can show that if $|\Sing\Phi|=0$, then functions from $C_C^\infty (\Omega)\cap E^\Phi (\Omega)$ are dense in $E^\Phi (\Omega)$.

\begin{theorem}\label{gestosc w E}
Let $\Omega\subset \R^d$ be open and $\Phi$ be a MO function defined on $\Omega$. If $|\Sing\Phi|=0$, then $C_C^\infty(\Omega)\cap E^\Phi (\Omega)$ is dense in $E^\Phi (\Omega)$. In other words $\overline{C_C^\infty(\Omega)\cap E^\Phi(\Omega)}=E^\Phi (\Omega)$. 
\end{theorem}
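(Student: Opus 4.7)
The plan is to reduce the density of $C_C^\infty(\Omega)\cap E^\Phi(\Omega)$ to the density of characteristic functions of compact sets, which is exactly the content of Theorem \ref{zwarte}. By Theorem \ref{podstawowe wlasnosci}~(5), the set $S(\Omega)\cap E^\Phi(\Omega)$ is already dense in $E^\Phi(\Omega)$, so by linearity of the approximation and the triangle inequality it is enough to approximate a single $\chi_A\in E^\Phi(\Omega)$, where $A\subset\Omega$ is measurable with $|A|<\infty$, by elements of $C_C^\infty(\Omega)\cap E^\Phi(\Omega)$.

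First, I would use inner regularity of the Lebesgue measure to choose compact sets $K_m\subset A$ with $|A\setminus K_m|\to 0$. Since $0\le\chi_{K_m}\le\chi_A\in E^\Phi(\Omega)$, each $\chi_{K_m}$ also lies in $E^\Phi(\Omega)$. For every $\lambda>0$ the pointwise bound $\Phi(x,\lambda(\chi_A-\chi_{K_m})(x))\le\Phi(x,\lambda\chi_A(x))$ together with $\chi_{K_m}\to\chi_A$ a.e.\ permits the Lebesgue Dominated Convergence Theorem to conclude $I_\Phi(\lambda(\chi_A-\chi_{K_m}))\to 0$, and hence, by Theorem \ref{podstawowe wlasnosci}~(1), $\|\chi_A-\chi_{K_m}\|_\Phi\to 0$.

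Next, applying Theorem \ref{zwarte} (which uses the hypothesis $|\Sing\Phi|=0$) to each compact $K_m$ yields a function $g_m\in C_C^\infty(\Omega)$ with $\|g_m-\chi_{K_m}\|_\Phi<1/m$; the construction in the proof of Theorem \ref{zwarte} produces $g_m$ dominated by the characteristic function of an auxiliary open set $V$ with $\chi_V\in E^\Phi(\Omega)$, so automatically $g_m\in C_C^\infty(\Omega)\cap E^\Phi(\Omega)$. Combining the two approximations via
\[
\|\chi_A-g_m\|_\Phi\le\|\chi_A-\chi_{K_m}\|_\Phi+\|\chi_{K_m}-g_m\|_\Phi
\]
and letting $m\to\infty$ finishes the proof. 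No new obstacle appears here: all the geometric work—constructing nested open neighborhoods avoiding $\Sing\Phi$ and cutting off by a smooth Urysohn-type function—was already concentrated in Theorem \ref{zwarte}, and the present statement is the assembly of density of simple functions, inner regularity of Lebesgue measure, and the triangle inequality.
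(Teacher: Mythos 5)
Your proposal is correct and follows essentially the same route as the paper's proof: reduce via Theorem \ref{podstawowe wlasnosci}~(5) and linearity to approximating a single $\chi_A$ with $|A|<\infty$, use inner regularity plus dominated convergence to replace $A$ by compact subsets $K_m$, and then invoke Theorem \ref{zwarte}. Your explicit remarks that $\chi_{K_m}\in E^\Phi(\Omega)$ (so the hypothesis of Theorem \ref{zwarte} is met) and that the approximants $g_m$ lie in $E^\Phi(\Omega)$ are details the paper leaves implicit, but the argument is the same.
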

\begin{proof}
By Theorem \ref{podstawowe wlasnosci} (5), $E^\Phi(\Omega)\cap S(\Omega)$ is dense in $E^\Phi(\Omega)$. Every simple function is a finite linear combination of characteristic functions of measurable sets with finite measure. Hence, by linearity and in the view of Theorem \ref{zwarte} it suffices to show that any characteristic function  $\chi_A$ of a set of finite measure $A$ such that $\chi_A\in E^\Phi (\Omega)$ can be approximated by a characteristic function of a compact set.

For any such $A$, by inner regularity of the Lebesgue measure there exists a sequence of compact sets $K_n\subset A$, such that $|A\setminus K_n|<\frac{1}{n}$ for every $n\in\N$. For $\lambda>0$ we have
\[I_\Phi ( \lambda(\chi_A-\chi_{K_n}))=I_\Phi(\lambda (\chi_{A\setminus K_n})). \]
Clearly $\Phi(x,\lambda\chi_{A\setminus K_n}(x))\leq \Phi(x,\lambda\chi_{A}(x))$ for a.e. $x\in\Omega$. Since $I_\Phi(\lambda \chi_A)<\infty$, so
by the Lebesgue Dominated Theorem, we deduce that
\[\lim\limits_{n\to\infty}I_\Phi(\lambda(\chi_A-\chi_{K_n}))=0,\]
and so $\lim\limits_{n\to\infty}\| \chi_K-\chi_{K_n}\| _\Phi=0$.
\end{proof}

Since $E^\Phi (\Omega)= L^\Phi (\Omega)$ if and only if $\Phi$ satisfies $\Delta_2$  (Theorem \ref{podstawowe wlasnosci} (4)), the next result is an immediate consequence of Theorems  \ref{gestosc w E} and \ref{koniecznosc sing}.

\begin{corollary}\label{cor:gestosc w L^}

 Let $\Omega\subset \R^d$ be open and $\Phi$ be a MO function defined on $\Omega$ satisfying $\Delta_2$ condition. Then $C_C^\infty(\Omega)\cap L^\Phi (\Omega)$ is dense in $L^\Phi (\Omega)$ if and only if $|\Sing \Phi|=0$.
\end{corollary}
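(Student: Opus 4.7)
The plan is to obtain this corollary as an immediate consequence of the three previously established results, with the $\Delta_2$ condition serving only to collapse the finite-element subspace $E^\Phi(\Omega)$ onto the whole space $L^\Phi(\Omega)$.

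First I would invoke Theorem \ref{podstawowe wlasnosci}(4): since $\Phi$ satisfies $\Delta_2$, we have the identification $E^\Phi(\Omega)=L^\Phi(\Omega)$. Consequently the space on which density is to be established, and the candidate dense subspace, both reduce to objects already handled in the excerpt: $C_C^\infty(\Omega)\cap L^\Phi(\Omega) = C_C^\infty(\Omega)\cap E^\Phi(\Omega)$, and the ambient space is $E^\Phi(\Omega)$.

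For the sufficiency direction, assuming $|\Sing \Phi|=0$, Theorem \ref{gestosc w E} directly gives density of $C_C^\infty(\Omega)\cap E^\Phi(\Omega)$ in $E^\Phi(\Omega)$, which by the above identification is the desired statement. For the necessity direction, I would prove the contrapositive: if $|\Sing\Phi|>0$, then Theorem \ref{koniecznosc sing} asserts that $C_C^\infty(\Omega)\cap E^\Phi(\Omega)$ fails to be dense in $E^\Phi(\Omega)$, which under $\Delta_2$ is precisely the failure of $C_C^\infty(\Omega)\cap L^\Phi(\Omega)$ to be dense in $L^\Phi(\Omega)$.

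There is no real obstacle here; the whole content of the corollary is packaged in the earlier work. The only thing to verify is the bookkeeping step that under $\Delta_2$ the intersections $C_C^\infty(\Omega)\cap L^\Phi(\Omega)$ and $C_C^\infty(\Omega)\cap E^\Phi(\Omega)$ coincide, which is immediate from Theorem \ref{podstawowe wlasnosci}(4). Accordingly, the written proof will be only a few lines, citing Theorem \ref{podstawowe wlasnosci}(4) to reduce to $E^\Phi(\Omega)$ and then citing Theorems \ref{gestosc w E} and \ref{koniecznosc sing} for the two implications.
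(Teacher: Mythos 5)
Your proposal is correct and matches the paper's own argument exactly: the paper likewise notes that $\Delta_2$ gives $E^\Phi(\Omega)=L^\Phi(\Omega)$ via Theorem \ref{podstawowe wlasnosci}(4) and then cites Theorems \ref{gestosc w E} and \ref{koniecznosc sing} for the two implications. Nothing further is needed.
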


As a corollary we obtain extensions of Theorem 1 in \cite{YA} and Theorem 3.7.14 in \cite{HHK}. In the first theorem the authors assume local integrability of $\Phi$, while in the second one they assume  the so called (A0) condition, which  also implies  that $\Phi$ is locally integrable. In both cases $|\Sing \Phi|=0$.

An important class of $MO$ functions is a class of {\it double phase functionals} consisting of $\Phi: \Omega\times \mathbb{R}_+ \to \mathbb{R}_+$ such that 
\begin{equation}\label{eq:db}
\Phi(x,t) = t^{p(x)} + a(x) t^{r(x)},
\end{equation} 
where $a,p,r$ are measurable functions on $\Omega$,  $a(x) \ge 0$ a.e. and $1\le p(x) \le r(x) <\infty$ a.e.. Denote by $p^+$ and $p^-$,
\[
p^+ = \esssup_{x\in\Omega} p(x) \ \ \ \text{and} \ \ \ \ p^- = \essinf_{x\in\Omega} p(x),
\]
and $r^+$ and $r^-$ analogously as above.

\begin{corollary} Let $L^\Phi(\Omega)$ be the $MO$ space over an open set $\Omega\subset \mathbb{R}^d$, generated by double phase functional {\rm (\ref{eq:db})} with the assumption that the function $a(x)$ is essentially bounded on $\Omega$. 

If  {\rm(i)} $r^+ < \infty$, or {\rm (ii)} $p(x) < r(x)$ for a.a. $x\in {\Omega_1}$,
where $\Omega_1 = \supp a$,
and   $p^+< \infty$ and $r^+|_{\Omega_1}=\esssup_{x\in \Omega_1} r(x) < \infty$,
then  $C_C^\infty(\Omega)$ is dense in $L^\Phi (\Omega)$.

\end{corollary}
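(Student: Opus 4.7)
The plan is to reduce to Corollary \ref{cor:gestosc w L^}, so that it suffices to check that $\Phi$ is a Musielak--Orlicz function, that $\Phi$ satisfies $\Delta_2$, and that $|\Sing\Phi|=0$; in fact I will show $\Sing\Phi=\emptyset$. Verification that $\Phi$ is an MO function is routine, since $a,p,r$ are measurable, $p(x),r(x)\ge 1$ guarantees convexity in $t$, and $\Phi(x,\cdot)$ vanishes only at $0$.

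For $\Delta_2$, I expand $\Phi(x,2t)=2^{p(x)}t^{p(x)}+2^{r(x)}a(x)t^{r(x)}$. In case (i), both exponents are bounded by $r^+<\infty$, so $\Phi(x,2t)\le 2^{r^+}\Phi(x,t)$. In case (ii) the second summand vanishes on $\Omega\setminus\Omega_1$ (where $a\equiv 0$ by definition of $\Omega_1=\supp a$), while on $\Omega_1$ we control the exponents by $p^+$ and $r^+|_{\Omega_1}$, yielding $\Phi(x,2t)\le \max(2^{p^+},2^{r^+|_{\Omega_1}})\Phi(x,t)$. In either situation $\Delta_2$ holds with $h\equiv 0$.

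For $\Sing\Phi=\emptyset$, let $M=\|a\|_\infty$ and use the elementary inequality $s^q\le \max(1,s^Q)$ whenever $q\le Q$. In case (i) this gives the uniform-in-$y$ essential bound $\Phi(y,t)\le (1+M)\max(1,t^{r^+})$. In case (ii), treating $\Omega_1$ and its complement separately, I obtain $\Phi(y,t)\le \max(1,t^{p^+})+M\,\max(1,t^{r^+|_{\Omega_1}})$. Either bound depends only on $t$, so $\int_{B(x,r)\cap\Omega}\Phi(y,t)\,dy<\infty$ whenever the ball has finite Lebesgue measure. Hence no point lies in $\Sing\Phi$.

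Applying Corollary \ref{cor:gestosc w L^} then yields density of $C_C^\infty(\Omega)\cap L^\Phi(\Omega)$ in $L^\Phi(\Omega)$; the same uniform-in-$y$ bound, together with the compact support of any $f\in C_C^\infty(\Omega)$, shows $C_C^\infty(\Omega)\subset L^\Phi(\Omega)$, so the intersection coincides with $C_C^\infty(\Omega)$ and the conclusion follows. No step is genuinely difficult; the only point requiring attention is case (ii), where the exponent $r$ is only assumed bounded on $\Omega_1$. This is precisely handled by noting that $a\equiv 0$ off $\Omega_1$ kills the $t^{r(x)}$ contribution there, so the potentially unbounded values of $r$ on $\Omega\setminus\Omega_1$ do no harm in either the $\Delta_2$ estimate or the integrability estimate defining $\Sing\Phi$.
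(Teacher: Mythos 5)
Your proposal is correct and follows the paper's approach exactly: both arguments reduce the statement to Corollary \ref{cor:gestosc w L^} by checking that $\Phi$ satisfies $\Delta_2$ and that $|\Sing\Phi|=0$, together with the inclusion $C_C^\infty(\Omega)\subset L^\Phi(\Omega)$. The only difference is that the paper delegates the $\Delta_2$ verification to Theorem 1.8 of \cite{Kamzyl} and simply asserts $|\Sing\Phi|=0$ ``in view of the assumptions,'' whereas you supply the elementary uniform-in-$x$ bounds explicitly (your estimates in fact show that the hypothesis $p(x)<r(x)$ on $\Omega_1$ in case (ii) is not needed once $a$ is essentially bounded).
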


\begin{proof} Clearly $C_C^\infty(\Omega)\subset L^\Phi (\Omega)$. By Theorem 1.8 in \cite{Kamzyl}, if (i) or (ii) are satisfied then $\Phi$ satisfies $\Delta_2$ condition. Moreover in view of the assumptions on $a,p,r$, $|\Sing{\Phi}| =0$. We finish by applying Corollary \ref{cor:gestosc w L^}.
\end{proof}

Finally recall  that a given measurable function $1\le p(x)<\infty$ a.e. in $\Omega$, and 
 \[
 \Phi(x,t) = \frac{t^{p(x)}}{p(x)}, \ \ \ a.a. \  x\in\Omega, \ t\ge 0,
 \]
 the space $L^{p(\cdot)}(\Omega)= L^\Phi(\Omega)$ is called the {\it variable exponent Lebesgue space} \cite{CUF, DHHR, HHK, HH, zyl}.  The final well known  result is also a consequence of Corollary \ref{cor:gestosc w L^}.

\begin{corollary}\label{cor:power} \cite{DHHR}
 Let $L^{p(\cdot)}(\Omega)$ be the variable exponent Lebesgue space over open set $\Omega\subset \mathbb{R}^d$.
 If $p^+<\infty$ then $|\Sing \Phi|=0$.
 Consequently, $C_C^\infty(\Omega)$ is dense in $L^{p(\cdot)} (\Omega)$.
\end{corollary}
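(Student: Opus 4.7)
The plan is to verify the two hypotheses of Corollary \ref{cor:gestosc w L^} for $\Phi(x,t) = t^{p(x)}/p(x)$, namely that $\Phi$ satisfies the $\Delta_2$ condition and that $|\Sing \Phi|=0$, and then to check that under $p^+<\infty$ the set $C_C^\infty(\Omega)$ is already contained in $L^\Phi(\Omega)$ so that the intersection appearing in the corollary drops out.

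The $\Delta_2$ step is immediate: since $1\le p(x)\le p^+<\infty$ a.e.,
\[
\Phi(x,2t)=\frac{(2t)^{p(x)}}{p(x)}=2^{p(x)}\Phi(x,t)\le 2^{p^+}\Phi(x,t)
\]
for every $t\ge 0$ and a.a. $x\in\Omega$, so $\Delta_2$ holds with $C=2^{p^+}$ and $h\equiv 0$.

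For the singular set I would in fact show the stronger statement $\Sing\Phi=\emptyset$. Using $p(y)\ge 1$ and $p(y)\le p^+$ a.e., together with the elementary bound $t^{p(y)}/p(y)\le\max\{1,t^{p^+}\}$, valid for every such $y$ and every $t\ge 0$, for any $x\in\R^d$, $r>0$, $t\ge 0$ one obtains
\[
\int_{B(x,r)\cap\Omega}\Phi(y,t)\,dy\le \max\{1,t^{p^+}\}\,|B(x,r)|<\infty.
\]
Thus no point can lie in $\Sing\Phi$ and $|\Sing\Phi|=0$ trivially.

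The same pointwise bound applied to any $f\in C_C^\infty(\Omega)$, which is bounded by some $M$ on the compact set $K=\esssupp f$, gives $I_\Phi(\lambda f)\le\max\{1,(\lambda M)^{p^+}\}|K|<\infty$ for every $\lambda>0$. Hence $C_C^\infty(\Omega)\subset E^\Phi(\Omega)=L^\Phi(\Omega)$, the last equality coming from $\Delta_2$ and Theorem \ref{podstawowe wlasnosci}(4). Consequently $C_C^\infty(\Omega)\cap L^\Phi(\Omega)=C_C^\infty(\Omega)$, and Corollary \ref{cor:gestosc w L^} yields the density. There is no genuine obstacle: the whole argument collapses to the single uniform estimate $\Phi(x,t)\le\max\{1,t^{p^+}\}$, which is precisely where the hypothesis $p^+<\infty$ is used.
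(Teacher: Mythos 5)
Your proof is correct and follows exactly the route the paper intends: verify $\Delta_2$ and $|\Sing\Phi|=0$ from the uniform bound $\Phi(x,t)\le\max\{1,t^{p^+}\}$, check $C_C^\infty(\Omega)\subset L^\Phi(\Omega)$, and invoke Corollary \ref{cor:gestosc w L^} (the paper states this corollary without writing out these details). The only cosmetic point is that the paper's $\Delta_2$ definition asks for a \emph{positive} $h\in L^1$, so replace $h\equiv 0$ by, say, $h(x)=e^{-|x|^2}$.
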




\begin{thebibliography}{99}

\bibitem{Adams}
R. Adams and J. Fournier \emph{Sobolev Spaces}, Second Edition, Elsevier, 2003.










\bibitem{Chen}
S. Chen, \emph{Geometry of Orlicz Spaces}, Dissertationes Mathematicae 356, Warszawa, 1996.





\bibitem{CUF}

D.\,V.\,Cruz-Uribe and A.\, Fiorenza, \emph{Variable Lebesgue Spaces}, Birk\"auser, 2013.




 

\bibitem{DHHR} L.\,Diening, P.\, Harjulehto, P.\, H\"ast\"o and  M.\, Ruzicka,
\emph{Lebesgue and Sobolev Spaces with Variable Exponents}, Lecture Notes in Mathematics, Springer, 2017.

 

\bibitem{Gossez}
J. P.  Gossez, \emph{Some approximation properties in Orlicz-Sobolev spaces}, Studia Math. \textbf{74}(1)~(1982), 17--24.


\bibitem{LC}
L. C. Grove, \emph{Classical groups and geometric algebra},  AMS, 2003. 


\bibitem{HHK} P. Harjulehto, P. H\"ast\"o and R. Kl\'en, \emph{ Generalized Orlicz spaces and related PDE}, Nonlinear Analysis: Theory, Methods \& Applications, \textbf{143}~(2016), 155--173.


\bibitem{HH}  P. Harjulehto and P. H\"ast\"o, \emph{Orlicz Spaces and Generalized Orlicz Spaces} Lecture Notes in Math., vol. 2236, Springer-Verlag, (2019).



\bibitem{HH1979}
H. Hudzik, \emph{The problem of separability, duality, reflexivity and comparison for generalized Orlicz-Sobolev space $W^{k,M}(\Omega)$}, 
Comment. Math. Parce Mat. \textbf{21}~ (1979), 315--324.

 

 








\bibitem{Kam1984} A.\, Kami\'nska, \emph{Some convexity properties of Musielak-Orlicz spaces of Bochner type}, Supplemento ai Rendiconti del Circolo Matemàtico di Palermo. Serie II \textbf{5}~(1984), 63--73.

\bibitem{Kam1998}
A.\, Kami\'nska, \emph{Indices, convexity and concavity in Musielak-Orlicz spaces},  Functiones Math. \textbf{26}~(1998), 67--84.   
      Special volume on the 70th birthday of  J.Musielak. 



\bibitem{KamKub}
A. \, Kami\'nska and D. Kubiak, \emph{The Daugavet property in the Musielak-Orlicz spaces}, J. Math. Analysis \textbf{427}~(2015), 873--898.

\bibitem{Kamzyl}
A. Kami\'nska and M.  \.Zyluk, \emph{Uniform convexity, superreflexivity and $B$-convexity of generalized Sobolev spacs $W^{1,\Phi}$}, J. Math. Anal. Appl. https://doi.org/10.1016/j.jmaa.2021.125925. arXiv:2112.05862v1








 




\bibitem{Mus} J.\, Musielak, \emph{Orlicz Spaces and Modular Spaces}, Lecture Notes in Math., vol. 1034, Springer-Verlag, Berlin, 1983.




\bibitem{YA}
A. Youssfi and Y. Ahmida, \emph{Some approximation results in Musielak-Orlicz space}, Czechoslovak Math. J.  \textbf{70(145)}~2020, no. 2, 453--471.



\bibitem{zyl}
M. \.Zyluk, \emph{On density of smooth functions in Musielak Orlicz Sobolev spaces and uniform convexity}, 
 Dotoral Dissertation, The University of Memphis,  May 2021. 
\end{thebibliography}
\end{document}